\newcommand{\terminates}{\!\!\downarrow}
\newcommand{\diverges}{\!\!\uparrow}
\newcommand{\tuple}[1]{\langle #1 \rangle}
\newcommand{\set}[1]{\{ #1 \}}
\newcommand{\compr}[2]{\{ #1 \,:\, #2 \}}
\newcommand{\simgamma}{\sim_\gamma}
\newcommand{\legammaT}{\le_{\gamma}}
\DeclareMathOperator{\PA}{PA}
\DeclareMathOperator{\Skolem}{Skolem}
\renewcommand{\le}{\leqslant}
\renewcommand{\ge}{\geqslant}
\renewcommand{\leq}{\leqslant}
\renewcommand{\geq}{\geqslant}
\newtheorem{theorem}{Theorem}[section]
\newtheorem{lemma}[theorem]{Lemma}
\newtheorem{prop}[theorem]{Proposition}
\newtheorem{corollary}[theorem]{Corollary}
\newtheorem{question}[theorem]{Question}
\theoremstyle{definition}
\newtheorem{definition}[theorem]{\bf Definition}
\newtheorem{example}[theorem]{\bf Example}
\newtheorem*{remark}{Remark}
\begin{document}

\title{Fixpoints and Relative Precompleteness}

\author[A. Golov]{Anton Golov}
\address[Anton Golov]{Radboud University Nijmegen\\
Department of Mathematics\\
P.O. Box 9010, 6500 GL Nijmegen, the Netherlands}
\email{agolov@science.ru.nl}

\author[S. A. Terwijn]{Sebastiaan A. Terwijn}
\address[Sebastiaan A. Terwijn]{Radboud University Nijmegen\\
Department of Mathematics\\
P.O. Box 9010, 6500 GL Nijmegen, the Netherlands}
\email{terwijn@math.ru.nl}

\begin{abstract}
  We study relative precompleteness in the context of the theory of
  numberings, and relate this to a notion of lowness.
  We introduce a notion of divisibility for numberings, and use it
  to show that for the class of divisible numberings, lowness and
  relative precompleteness coincide with being computable.

  We also study the complexity of Skolem functions arising from
  Arslanov's completeness criterion with parameters.
  We show that for suitably divisible numberings, these Skolem
  functions have the maximal possible Turing degree.
  In particular this holds for the standard numberings of the
  partial computable functions and the c.e.\ sets.
\end{abstract}

\keywords{fixpoint theorems, precomplete numberings}

\subjclass[2010]{
03D25, 
03D45}  

\date{\today}

\maketitle

\section{Introduction}

A numbering is a surjective function $\gamma:\omega\rightarrow S$
to a given set of objects~$S$. The computability theoretic study of
numberings was initiated by Ershov~\cite{Ershov} in a series of papers.
For a given numbering $\gamma$, we can consider two functions to be
equivalent if their values are mapped by $\gamma$ to the same objects in~$S$.
A key notion in the theory is the notion of precompleteness, which says that
every partial computable (p.c.)\ function has a total extension modulo the
numbering.
Among other things, Ershov showed that Kleene's recursion theorem~\cite{Kleene}
holds for any precomplete numbering. The recursion theorem is then the special
case for the numbering of the p.c.\ functions.
The recursion theorem has been generalized in several other ways.
For example, Arslanov~\cite{Arslanov} generalized the recursion
theorem from the computable functions to the class of functions
computable from a Turing incomplete c.e.\ set. This gives a
completeness criterion for c.e.\ sets: A c.e.\ set is Turing
complete if and only if it can compute a fixed point free function.
Selivanov~\cite{Selivanov}
proved that Arslanov's theorem also holds for any precomplete numbering.
(This result was also discussed in Barendregt and Terwijn~\cite{BarendregtTerwijn},
who considered fixed point theorems in the more general setting of
partial combinatory algebras.)
Another generalization of the recursion theorem, simultaneously generalizing
Arslanov's theorem and a theorem of Visser~\cite{Visser},  was given in
Terwijn~\cite{Terwijn2018}. This is a result about the standard
numbering of the computably enumerable (c.e.)\ sets, and it is
currently open whether this also holds for every precomplete numbering.
Variations of Arslanov's completeness criterion that arise by considering
different kinds of fixpoints were studied in Jockusch et al.~\cite{Jockuschetal}.

In this paper, we study relative precompleteness, that is, for a given oracle $A$
we consider which numberings $\gamma$ have the property that every $A$-p.c.\ function
has a total $A$-computable extension modulo $\gamma$.
We introduce a notion of lowness (Definition~\ref{def:low}), and show that for
a certain class of \emph{divisible\/} numberings that includes the standard
numberings of the p.c.\ functions and the c.e.\ sets,
lowness and $A$-precompleteness are equivalent to $A$ being computable
(Theorem~\ref{thm:W-phi-comp-equiv}).

We then proceed to study the complexity of Skolem functions that arise from
Arslanov completeness criterion with parameters.
It was shown in Terwijn~\cite{Terwijn2020} that these Skolem functions in general
cannot be computable.
Here we use the notions of lowness and divisibility from the first part of the
paper to show that they have the maximal possible Turing degree
(Corollary~\ref{cor:Skolem}).

Our notation from computability theory is mostly standard.  In the following,
$\varphi_n$ denotes the $n$-th partial computable (p.c.)\ function, in some
standard numbering of the p.c.\ functions.  Similarly, $W_n$ denotes the $n$-th
computably enumerable (c.e.)\ set, in some standard numbering of the c.e.\ sets.
We denote these numberings as $\varphi_{(-)}$ and $W_{(-)}$ respectively.
Partial computable functions are denoted by lower case Greek letters and (total)
computable functions by lower case Roman letters. $\omega$ denotes the natural
numbers.  We write $\varphi_e(n)\terminates$ if this computation is defined and
$\varphi_e(n)\diverges$ otherwise.  We let $\tuple{e, n}$ denote a computable
pairing function.  For unexplained notation we refer to
Odifreddi~\cite{Odifreddi} and Soare~\cite{Soare}.

\section{Turing degrees, up to a numbering}
\label{sec:numberings}

The theory of numberings originates from Ershov~\cite{Ershov}.

\begin{definition}
  A \emph{numbering} of a set $S$ is a surjection $\gamma : \omega \to S$.
\end{definition}

For every numbering $\gamma$ there is an associated equivalence relation
$\sim_\gamma$ on $\omega$ defined by $n \simgamma m$ iff $\gamma(n) = \gamma(m)$.
Thus, as noted in Bernardi and Sorbi~\cite{BernardiSorbi},
the study of numberings is essentially equivalent to the study of countable
equivalence relations.

Of particular interest are so-called precomplete numberings.
The notion of a precomplete numbering was introduced by Ershov~\cite{Ershov}.
We will work with the following relativized version of precompleteness.

\begin{definition}
  Given an oracle $A$, a numbering $\gamma$ is \emph{$A$-precomplete} if for
  every partial $A$-computable function $\psi$ there is a total $A$-computable
  function $f$ such that
  \[
    \psi(n)\terminates \implies \psi(n) \simgamma f(n).
  \]
  for every~$n$.
  We call $f$ a \emph{totalization of $\psi$ modulo $\gamma$}.
  Note that $\emptyset$-precomplete is the same as Ershov's notion of
  precompleteness.
\end{definition}

\begin{remark}
  A relativized notion of precompleteness was already studied by
  Selivanov~\cite{Selivanov,SelivanovSurvey}.  In his definition, Selivanov
  requires the totalization $f$ to be absolutely computable, rather than merely
  $A$-computable.
  Thus Selivanov's notion is a combination of relativization and a {\em
  lowness\/} notion. We introduce a lowness notion for numberings in
  Definition~\ref{def:low} below.
\end{remark}

\subsection{Lowness}

While our aim is to characterize the Turing degrees of particular functions, in
the context of numberings it is more natural to show results about the following
relation, which expresses a lowness property.

\begin{definition} \label{def:low}
  Given a numbering $\gamma$ and oracles $A$ and $B$, we say that $A$ is
  \emph{$(\gamma, B)$-low} if for every $A$-computable function $f$ there is a
  $B$-computable function $g$ such that for all $n$,
  \begin{equation} \label{eq:lift}
    f(n) \sim_\gamma g(n).
  \end{equation}
\end{definition}

We will denote this by $A \legammaT B$.  We will say that $A$ is
\emph{$\gamma$-low} if $A \legammaT \emptyset$.  A function $g$ satisfying
\eqref{eq:lift} will be called a \emph{$\gamma$-lift of $f$ to $B$}.

Let us now look at some basic properties and some simple examples of lowness
results.

\begin{theorem}
  For every $\gamma$, the relation $\legammaT$ is a preorder.
\end{theorem}

\begin{proof}
  Reflexivity is trivial.  For transitivity, suppose $A \legammaT B \legammaT C$
  and let $f$ be an $A$-computable function.  Since $A \legammaT B$, $f$ has a
  $\gamma$-lift $g$ to $B$, and since $B \legammaT C$, $g$ has a $\gamma$-lift
  $h$ to $C$.  Since these are $\gamma$-lifts, for every $n$ we have that
  \[
    f(n) \simgamma g(n) \simgamma h(n).\qedhere
  \]
\end{proof}

The following result shows that the lowness relation $\legammaT$
is coarser than Turing reducibility.

\begin{prop} \label{prop:leT-legammaT}
  For every numbering $\gamma$ and all $A$ and $B$, if $A \le_T B$ then $A
  \legammaT B$.
\end{prop}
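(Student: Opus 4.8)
The plan is to use the identity function as the required $\gamma$-lift, so that no property of $\gamma$ is actually needed beyond reflexivity of $\simgamma$. Assume $A \le_T B$. The one elementary fact I would invoke is that Turing reducibility is upward closed on the class of computable functions: if a function $f$ is $A$-computable and $A \le_T B$, then $f$ is also $B$-computable, since any query to the oracle $A$ can be answered by a ($B$-computable) subcomputation with oracle $B$.

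Now, given an arbitrary $A$-computable function $f$, I would simply set $g = f$. By the observation above, $g$ is $B$-computable. Moreover, for every $n$ we have $f(n) = g(n)$, hence $\gamma(f(n)) = \gamma(g(n))$, i.e.\ $f(n) \simgamma g(n)$ by reflexivity of $\simgamma$. Thus $g$ is a $\gamma$-lift of $f$ to $B$, and since $f$ was arbitrary, $A \legammaT B$. There is no real obstacle here; the only point worth stating carefully is the upward closure of the computable functions under $\le_T$, and the argument makes no use of $\gamma$ at all beyond the trivial case $\gamma(n) = \gamma(n)$ of the equivalence relation.
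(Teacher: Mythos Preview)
Your proof is correct and is essentially identical to the paper's own argument: take $g = f$ as its own $\gamma$-lift, using that every $A$-computable function is $B$-computable when $A \le_T B$.
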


\begin{proof}
  Since by assumption every $A$-computable function is $B$-computable, for any
  $A$-computable $f$ we can take $f$ itself as its $\gamma$-lift to $B$.
\end{proof}

The converse of Proposition~\ref{prop:leT-legammaT} does not hold by
Example~\ref{ex:low}, since for the numbering there
$A\legammaT \emptyset$ for every $A$, but not every $A$ is computable.
In Theorem~\ref{thm:legammaT-div-leT} we prove a partial converse of
Proposition~\ref{prop:leT-legammaT}.

\begin{corollary}
  A set $A$ is $\gamma$-low iff it is minimal with respect to the $\legammaT$
  ordering.
\end{corollary}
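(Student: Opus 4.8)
The plan is to reduce the statement to the single observation that $\emptyset$ is a least element for $\legammaT$. First I would record that, by Proposition~\ref{prop:leT-legammaT} applied to the trivial reduction $\emptyset \le_T B$, one has $\emptyset \legammaT B$ for every set $B$. Combined with the transitivity of $\legammaT$ established above, this is all the machinery that is required.

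For the implication from left to right, assume $A$ is $\gamma$-low, that is, $A \legammaT \emptyset$, and let $B$ be any set with $B \legammaT A$; I must produce a witness that $A \legammaT B$, which is precisely what minimality of $A$ in the preorder $\legammaT$ demands. Since $A \legammaT \emptyset$ and $\emptyset \legammaT B$, transitivity gives $A \legammaT B$. For the converse, assume $A$ is minimal. Because $\emptyset \legammaT A$ holds automatically, minimality of $A$ forces $A \legammaT \emptyset$, i.e.\ $A$ is $\gamma$-low.

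I do not expect any real obstacle here; the only point deserving care is the convention for ``minimal'' in a preorder that need not be antisymmetric, namely that $A$ is minimal if and only if $B \legammaT A$ implies $A \legammaT B$ for all $B$. With this reading the argument above in fact shows slightly more: every $\gamma$-low set is a $\legammaT$-least element, so the $\gamma$-low sets are exactly the sets lying in the bottom equivalence class of $\legammaT$, the class of $\emptyset$.
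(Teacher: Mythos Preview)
Your argument is correct and follows essentially the same route as the paper: both use Proposition~\ref{prop:leT-legammaT} with $\emptyset \le_T B$ to conclude $\emptyset \legammaT B$, and then transitivity to pass from $A \legammaT \emptyset$ to $A \legammaT B$. You are in fact more careful than the paper, spelling out both directions and the preorder reading of ``minimal'', whereas the paper leaves the converse implicit.
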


\begin{proof}
  $A$ is $\gamma$-low if $A\leq_\gamma \emptyset$. Since $\emptyset \leq_T B$
  for all $B$, it follows from Proposition~\ref{prop:leT-legammaT} that $A\leq_\gamma
  B$ for all $B$.
\end{proof}

For numberings $\gamma_1$ and $\gamma_2$, we say that $\gamma_2$ is a
\emph{quotient\/} of $\gamma_1$ if $n \sim_{\gamma_1} m$ implies that $n
\sim_{\gamma_2} m$ for all $n$ and~$m$.

\begin{prop} \label{lem:low-quotient}
  Let $\gamma_1$ and $\gamma_2$ be numberings and let $\gamma_2$ be a quotient
  of $\gamma_1$.  If $A \le_{\gamma_1} B$ then $A \le_{\gamma_2} B$.
\end{prop}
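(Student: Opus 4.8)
The plan is to unwind the definitions directly; this proposition is essentially immediate once one has the right diagram of implications in mind. Suppose $A \le_{\gamma_1} B$, and let $f$ be an arbitrary $A$-computable function. By the hypothesis $A \le_{\gamma_1} B$, there is a $B$-computable function $g$ that is a $\gamma_1$-lift of $f$, meaning $f(n) \sim_{\gamma_1} g(n)$ for every $n$.

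Now I would invoke the quotient hypothesis: since $\gamma_2$ is a quotient of $\gamma_1$, the relation $\sim_{\gamma_1}$ is contained in $\sim_{\gamma_2}$, so from $f(n) \sim_{\gamma_1} g(n)$ we obtain $f(n) \sim_{\gamma_2} g(n)$ for every $n$. Thus the very same $B$-computable function $g$ witnesses that $f$ has a $\gamma_2$-lift to $B$. Since $f$ was an arbitrary $A$-computable function, this shows $A \le_{\gamma_2} B$.

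There is no real obstacle here: the argument is a one-line chase through Definition~\ref{def:low} and the definition of quotient, and the same lift function works for both numberings. The only thing worth stating explicitly, for the reader's comfort, is that being a quotient means precisely that the identity map on $\omega$ carries $\sim_{\gamma_1}$-classes into $\sim_{\gamma_2}$-classes, which is exactly what licenses the passage from \eqref{eq:lift} for $\gamma_1$ to \eqref{eq:lift} for $\gamma_2$ with $g$ unchanged.
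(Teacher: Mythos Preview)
Your proof is correct and follows essentially the same approach as the paper: take a $\gamma_1$-lift $g$ of an arbitrary $A$-computable $f$, then use the quotient hypothesis to pass from $f(n)\sim_{\gamma_1}g(n)$ to $f(n)\sim_{\gamma_2}g(n)$, so the same $g$ serves as a $\gamma_2$-lift.
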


\begin{proof}
  Let $f$ be $A$-computable and let $g$ be its $\gamma_1$-lift to $B$. For every
  $n$, $f(n) \sim_{\gamma_1} g(n)$.  Since $\gamma_2$ is a quotient of
  $\gamma_1$, it follows that $f(n) \sim_{\gamma_2} g(n)$.
\end{proof}

\begin{example} \label{ex:low}
  Let $A$ be an arbitrary oracle.  Recall that $\varphi^A_{(-)}$ is the standard
  numbering of partial $A$-computable functions.  By the $S$-$m$-$n$-theorem,
  for every $A$-computable function $f$ there exists a primitive recursive
  function $g$ such that for all $n$,
  \[
    \varphi^A_{f(n)} = \varphi^A_{g(n)}
  \]
  and hence $A$ is $\varphi^A_{(-)}$-low.

  Since the numbering of $A$-c.e.\ sets $W^A_{(-)}$ is a quotient of
  $\varphi^A_{(-)}$, every $A$ is $W^A_{(-)}$-low by
  Proposition~\ref{lem:low-quotient}.
\end{example}

\subsection{Divisibility}

In order to turn results about lowness into results about Turing reducibility,
we will make use of the following notion of \emph{divisibility}.\footnote{
The notion of divisibility is very similar to the notion of an $A$-wide numbering
from Selivanov~\cite{SelivanovWide}.
  A numbering $\gamma$ is $A$-wide if there exists an $A$-c.e.\ sequence of
  $\gamma$-index sets $(S_n)_{n \in \omega}$ and an $A$-computable function
  $u$ such that for all $n$, $u(n) \in S_n - \bigcup_{k \neq n} S_k$.
Every $A$-wide numbering is $(\omega, A)$-divisible, but it is not clear whether
the converse holds.}
For $x\in\omega$, we let $[x]_{\simgamma}$ denote the equivalence class of $x$
under $\sim_\gamma$.

\begin{definition} \label{def:divisible}
  Let $n \le \omega$.  A numbering $\gamma$ is said to be \emph{$(n,
  A)$-divisible} if there exist $A$-computable sequences $(x_i)_{i < n}$ and
  $(e_i)_{i < n}$ such that:
  \begin{enumerate}[\rm (i)]
    \item For all $i < n$, $[x_i]_{\simgamma} \subseteq W^A_{e_i}$.
    \item For all distinct $i, j < n$, $[x_i]_{\simgamma} \cap W^A_{e_j} =
      \emptyset$.
  \end{enumerate}
  We say that the points $x_i$ and the sets $W^A_{e_i}$ \emph{witness} the
  divisibility of $\gamma$.
We use $n$-divisible to mean $(n, \emptyset)$-divisible.\footnote{\label{fn}
For a c.e.\ equivalence relation (or ceer, cf.\ section~\ref{sec:ceers} below)
that is $\omega$-divisible,  
since the equivalence classes $[x_i]_\sim$ are c.e.,  
we can simply take $W_{e_i}=[x_i]_\sim$ in Definition~\ref{def:divisible}. 
In Andrews and Sorbi~\cite{AndrewsSorbi2019}, a ceer was called \emph{light\/} 
if it has an effectively enumerable antichain. 
So for ceers, $\omega$-divisible is exactly the same as light.}
\end{definition}

\begin{prop} \label{lem:div-quotient}
  Let $\gamma_1$ and $\gamma_2$ be numberings and let $\gamma_2$ be a quotient
  of $\gamma_1$.  If $\gamma_2$ is $(n, A)$-divisible then so is $\gamma_1$.
\end{prop}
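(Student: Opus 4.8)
The plan is to transfer the divisibility witnesses for $\gamma_2$ directly to $\gamma_1$, using the fact that a quotient map can only merge equivalence classes, never split them. Since $\gamma_2$ is a quotient of $\gamma_1$, for every $x$ we have $[x]_{\sim_{\gamma_1}} \subseteq [x]_{\sim_{\gamma_2}}$: if $y \sim_{\gamma_1} x$ then $y \sim_{\gamma_2} x$. This containment is the only geometric fact we need.

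So suppose $\gamma_2$ is $(n, A)$-divisible, witnessed by $A$-computable sequences $(x_i)_{i<n}$ and $(e_i)_{i<n}$ with $[x_i]_{\sim_{\gamma_2}} \subseteq W^A_{e_i}$ for all $i<n$, and $[x_i]_{\sim_{\gamma_2}} \cap W^A_{e_j} = \emptyset$ for all distinct $i,j<n$. I claim the \emph{same} sequences witness $(n,A)$-divisibility of $\gamma_1$. For condition (i): $[x_i]_{\sim_{\gamma_1}} \subseteq [x_i]_{\sim_{\gamma_2}} \subseteq W^A_{e_i}$. For condition (ii): for distinct $i,j<n$, $[x_i]_{\sim_{\gamma_1}} \cap W^A_{e_j} \subseteq [x_i]_{\sim_{\gamma_2}} \cap W^A_{e_j} = \emptyset$. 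The sequences are $A$-computable by hypothesis, so all requirements of Definition~\ref{def:divisible} are met.

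There is essentially no obstacle here; the lemma is a direct unwinding of definitions, with the one substantive observation being that a quotient only coarsens the equivalence relation, so equivalence classes of $\gamma_1$ sit inside those of $\gamma_2$. The only thing to be slightly careful about is the direction of the quotient relation — it is $\gamma_2$ that is the quotient (the coarser numbering), hence $\gamma_1$-classes are the smaller ones, which is exactly what makes both containments go through. Note this also shows the condition of being $(n,A)$-divisible is, in this sense, inherited "downward" along quotients, dual to Proposition~\ref{lem:low-quotient}.
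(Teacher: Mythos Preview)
Your proof is correct and takes exactly the same approach as the paper, which simply states that ``the same witnesses can be used and the requirements are easily verified.'' Your proposal is a faithful expansion of that one-line argument, making explicit the key observation $[x_i]_{\sim_{\gamma_1}} \subseteq [x_i]_{\sim_{\gamma_2}}$ that justifies reusing the witnesses.
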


\begin{proof}
  The same witnesses can be used and the requirements are easily verified.
\end{proof}

\begin{example} \label{ex:divisible}
  For an arbitrary oracle $A$, the numbering $W^A_{(-)}$ is $(\omega,
  A)$-divisible since we can take the enumerations $W^A_{x_i} = \set{i}$ and
  \[
      W^A_{e_i} = \compr{d \in \omega}{i \in W^A_d}.
  \]

  It is then easily verified that the conditions from
  Definition~\ref{def:divisible} are met, because when $i \neq j$ then $j\notin
  W^A_{x_i}$.  Using the fact that $W^A_{(-)}$ is a quotient of
  $\varphi^A_{(-)}$, we also see that $\varphi^A_{(-)}$ is $(\omega,
  A)$-divisible by Proposition~\ref{lem:div-quotient}.
\end{example}

\begin{prop}
  \label{lem:leT-lifts-divisibility}
  For any oracles $A \le_T B$ and any numbering $\gamma$, if $\gamma$ is $(n,
  A)$-divisible then it is also $(n, B)$-divisible.
\end{prop}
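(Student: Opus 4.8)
The plan is to take the $A$-computable witnesses for $(n,A)$-divisibility and show they remain valid witnesses when we switch to the oracle $B$. Suppose $(x_i)_{i<n}$ and $(e_i)_{i<n}$ are $A$-computable sequences satisfying conditions (i) and (ii) of Definition~\ref{def:divisible}, so that $[x_i]_{\simgamma}\subseteq W^A_{e_i}$ for all $i<n$ and $[x_i]_{\simgamma}\cap W^A_{e_j}=\emptyset$ for all distinct $i,j<n$. Since $A\le_T B$, both sequences $(x_i)_{i<n}$ and the sequence of sets $(W^A_{e_i})_{i<n}$ are $B$-computable in the appropriate sense: first, $(x_i)_{i<n}$ is literally a $B$-computable sequence of natural numbers because $A$-computable implies $B$-computable; second, each $W^A_{e_i}$ is a $B$-c.e.\ set (indeed $B$-computable when $n$ is finite, but $B$-c.e.\ suffices), so by the relativized $S$-$m$-$n$ theorem we can find a $B$-computable sequence $(e'_i)_{i<n}$ of indices with $W^B_{e'_i}=W^A_{e_i}$.

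The key point is that the sets themselves do not change, only the index under which we present them. Hence conditions (i) and (ii) transfer verbatim: $[x_i]_{\simgamma}\subseteq W^A_{e_i}=W^B_{e'_i}$ gives (i), and for distinct $i,j<n$ we have $[x_i]_{\simgamma}\cap W^B_{e'_j}=[x_i]_{\simgamma}\cap W^A_{e_j}=\emptyset$, giving (ii). The only subtlety worth spelling out is producing the $B$-computable sequence $(e'_i)_{i<n}$ uniformly: since $A\le_T B$, there is a $B$-computable function computing, from input $\langle i,s\rangle$, the characteristic value of "$s$ has entered $W^A_{e_i}$ by stage $s$" (using a fixed $A$-computable enumeration and replacing the oracle $A$ by a $B$-computable procedure for it); feeding this through the relativized $S$-$m$-$n$ theorem yields the desired $(e'_i)_{i<n}$ as a $B$-computable sequence.

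I expect no real obstacle here; this is essentially a bookkeeping argument about re-indexing c.e.\ sets under a stronger oracle. The one place to be slightly careful is the case $n=\omega$, where "sequence" means a uniformly $B$-computable family and one must check that the uniformity in the $S$-$m$-$n$ step is preserved — but the relativized $S$-$m$-$n$ theorem is uniform, so this is immediate. I would therefore write the proof in a few lines, emphasizing that the witnessing points and sets are reused unchanged and only the indices $e_i$ are replaced by $B$-indices $e'_i$ for the same sets via relativized $S$-$m$-$n$.
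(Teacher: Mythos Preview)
Your proposal is correct and takes the same approach as the paper: reuse the witnessing points and sets, re-indexing the latter as $B$-c.e.\ sets via relativized $S$-$m$-$n$. The paper's proof simply says ``Trivial, since the same witnesses can be used,'' so your write-up is a faithful (and more explicit) unpacking of that one-line argument.
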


\begin{proof}
  Trivial, since the same witnesses can be used.
\end{proof}

The following result is a partial converse of Proposition~\ref{prop:leT-legammaT}.

\begin{theorem}
  \label{thm:legammaT-div-leT}
  Let $\gamma$ be a numbering and let $A \legammaT B$.  If $\gamma$ is $(2,
  B)$-divisible then $A \le_T B$.
\end{theorem}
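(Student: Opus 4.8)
The plan is to exploit the oracle $A$ to build an $A$-computable function $f$ that encodes membership in $A$ by the two divisibility witnesses, and then to extract from a $\gamma$-lift $g$ of $f$ to $B$ a $B$-computable decision procedure for $A$.

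First I would fix witnesses for the $(2,B)$-divisibility of $\gamma$: points $x_0, x_1$ and indices $e_0, e_1$ (finitely much data, hence trivially $B$-computable) with $[x_0]_{\simgamma} \subseteq W^B_{e_0}$, $[x_1]_{\simgamma} \subseteq W^B_{e_1}$, and $[x_0]_{\simgamma} \cap W^B_{e_1} = \emptyset = [x_1]_{\simgamma} \cap W^B_{e_0}$. Next, define $f$ by $f(n) = x_0$ if $n \in A$ and $f(n) = x_1$ if $n \notin A$; this $f$ is $A$-computable. Since $A \legammaT B$, there is a $B$-computable $g$ with $f(n) \simgamma g(n)$ for all $n$.

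The key observation is then the following dichotomy. If $n \in A$ then $g(n) \simgamma x_0$, so $g(n) \in [x_0]_{\simgamma}$, whence $g(n) \in W^B_{e_0}$ by (i) and $g(n) \notin W^B_{e_1}$ by (ii); symmetrically, if $n \notin A$ then $g(n) \in W^B_{e_1}$ and $g(n) \notin W^B_{e_0}$. Thus for every $n$ the value $g(n)$ lies in exactly one of $W^B_{e_0}$ and $W^B_{e_1}$, and which one determines whether $n \in A$. This gives the algorithm: with oracle $B$, compute $g(n)$, then dovetail the enumerations of $W^B_{e_0}$ and $W^B_{e_1}$ until $g(n)$ appears in one of them (it must, by the above), and output ``$n \in A$'' iff it appears in $W^B_{e_0}$. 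This procedure halts and is correct, so $A \le_T B$.

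There is no serious obstacle here; the only points requiring care are the choice of $f$ as the ``characteristic selector'' between the two witness points $x_0, x_1$, and the observation that conditions (i) and (ii) of divisibility are used together — (i) guarantees the dovetailed search terminates, and (ii) guarantees the answer it returns is unambiguous.
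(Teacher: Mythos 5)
Your proof is correct and follows essentially the same route as the paper: encode $\chi_A$ via the two witness points, lift to $B$, and decide membership by checking which of the two witnessing $B$-c.e.\ sets contains $g(n)$. In fact you spell out slightly more carefully than the paper does why the decision is $B$-computable (dovetailing the two enumerations, with termination guaranteed by condition (i) and unambiguity by condition (ii)).
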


\begin{proof}
  Let $\gamma$ be such a numbering and let the points $x, y$ and the c.e.\ sets
  $X, Y$ witness the divisibility of $\gamma$.
  Define $A$-computable $f$ by
  \[
    f(n) =
    \begin{cases}
      x & \text{if $n \not \in A$}\\
      y & \text{if $n \in A$}
    \end{cases}.
  \]

  Let $g$ be a $\gamma$-lift of $f$ to $B$.  Note that by property (i) above,
  $g(n) \in X \cup Y$, and by property (ii), if $g(n) \sim_\gamma x$ then $g(n)
  \not \in Y$ and vice-versa.  Define $h$ as follows:
  \[
    h(n) =
    \begin{cases}
      0 & \text{if $g(n) \in X$}\\
      1 & \text{if $g(n) \in Y$}\\
    \end{cases}.
  \]

  Since $h$ is $B$-computable and $h = \chi_A$, $A$ is $B$-computable.
\end{proof}

\subsection{Partial computable functions and c.e.\ sets}

Before we continue, let us consider a few more examples and investigate the
consequences of Theorem~\ref{thm:legammaT-div-leT} for the standard numberings
$\varphi^A_{(-)}$ and $W^A_{(-)}$.  The contrapositive of
Theorem~\ref{thm:legammaT-div-leT} gives us some examples of non-divisible
numberings.

\begin{corollary}
  For noncomputable $A$, the numberings $W^A_{(-)}$ and $\varphi^A_{(-)}$ are
  not $2$-divisible.
\end{corollary}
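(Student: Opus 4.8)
The plan is to derive this as a direct contrapositive of Theorem~\ref{thm:legammaT-div-leT}, using the lowness facts established for these numberings in Example~\ref{ex:low}. First I would recall that by Example~\ref{ex:low}, every oracle $A$ is $\varphi^A_{(-)}$-low and every oracle $A$ is $W^A_{(-)}$-low; that is, $A \le_{\varphi^A_{(-)}} \emptyset$ and $A \le_{W^A_{(-)}} \emptyset$. Now suppose, for contradiction, that for some noncomputable $A$ the numbering $W^A_{(-)}$ were $2$-divisible, i.e.\ $(2,\emptyset)$-divisible. Then Theorem~\ref{thm:legammaT-div-leT}, applied with $B = \emptyset$ and $\gamma = W^A_{(-)}$, together with $A \le_{W^A_{(-)}} \emptyset$, would yield $A \le_T \emptyset$, i.e.\ $A$ is computable — contradicting the choice of $A$. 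The same argument works verbatim for $\varphi^A_{(-)}$ in place of $W^A_{(-)}$.

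There is no real obstacle here: the entire content has been packaged into the preceding theorem and example, so the corollary is a one-line deduction. If one wanted to be slightly more economical in the write-up, one could note that by Proposition~\ref{lem:div-quotient} it suffices to prove the non-divisibility of $W^A_{(-)}$, since $W^A_{(-)}$ is a quotient of $\varphi^A_{(-)}$: if $\varphi^A_{(-)}$ were $2$-divisible we could not immediately conclude anything about $W^A_{(-)}$, but the implication runs the other way (divisibility of the quotient lifts to the original), so in fact proving $\varphi^A_{(-)}$ non-$2$-divisible would follow once we know $W^A_{(-)}$ is non-$2$-divisible — wait, that is backwards. The clean route is simply to apply Theorem~\ref{thm:legammaT-div-leT} separately to each numbering, using that $A$ is low for each. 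I would therefore write the proof as: assume $W^A_{(-)}$ (resp.\ $\varphi^A_{(-)}$) is $2$-divisible; since $A \le_{W^A_{(-)}} \emptyset$ (resp.\ $A \le_{\varphi^A_{(-)}} \emptyset$) by Example~\ref{ex:low}, Theorem~\ref{thm:legammaT-div-leT} with $B = \emptyset$ gives $A \le_T \emptyset$, contradicting non-computability of $A$.

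\begin{proof}
  Suppose for contradiction that $W^A_{(-)}$ is $2$-divisible for some
  noncomputable $A$. By Example~\ref{ex:low} we have $A \legammaT \emptyset$
  for $\gamma = W^A_{(-)}$, and $W^A_{(-)}$ is $(2, \emptyset)$-divisible by
  assumption, so Theorem~\ref{thm:legammaT-div-leT} applied with $B = \emptyset$
  yields $A \le_T \emptyset$, i.e.\ $A$ is computable, a contradiction. The
  argument for $\varphi^A_{(-)}$ is identical, again using Example~\ref{ex:low}
  to obtain $A \legammaT \emptyset$ for $\gamma = \varphi^A_{(-)}$.
\end{proof}
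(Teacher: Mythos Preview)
Your proof is correct and is exactly the argument the paper gives: apply Theorem~\ref{thm:legammaT-div-leT} with $B=\emptyset$, using Example~\ref{ex:low} to supply $A \legammaT \emptyset$ for each of the two numberings. The digression about Proposition~\ref{lem:div-quotient} is unnecessary (as you noticed, the implication runs the wrong way for the shortcut you considered), but your final written proof is clean and matches the paper's one-line justification.
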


\begin{proof}
  Immediate by the theorem together with Example~\ref{ex:low}.
\end{proof}

The following propositions will serve to characterize the behavior of
$\legammaT$ for the numbering of the $A$-c.e.\ sets for any $A$.

\begin{prop} \label{lem:lower}
  If $A$ is $\gamma$-low and $B \le_T A$, then $B$ is $\gamma$-low as well.
\end{prop}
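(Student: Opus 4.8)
The plan is to observe that this is an immediate consequence of the preorder structure of $\legammaT$ together with Proposition~\ref{prop:leT-legammaT}. Unpacking the definition, ``$A$ is $\gamma$-low'' means $A \legammaT \emptyset$, so I want to conclude $B \legammaT \emptyset$ from the hypotheses $A \legammaT \emptyset$ and $B \le_T A$.

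First I would apply Proposition~\ref{prop:leT-legammaT} to the hypothesis $B \le_T A$, which yields $B \legammaT A$. Then, since $\legammaT$ is a preorder (by the theorem preceding Proposition~\ref{prop:leT-legammaT}), I would chain $B \legammaT A \legammaT \emptyset$ to obtain $B \legammaT \emptyset$, which is exactly the assertion that $B$ is $\gamma$-low.

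Alternatively, one can argue directly without invoking the preorder: given a $B$-computable function $f$, the reduction $B \le_T A$ makes $f$ an $A$-computable function, so $\gamma$-lowness of $A$ supplies a computable $g$ with $f(n) \simgamma g(n)$ for all $n$; this $g$ witnesses $B \legammaT \emptyset$. Either route is routine, so there is no real obstacle here — the point is simply that $\gamma$-lowness is downward closed under Turing reducibility because $\legammaT$ refines $\le_T$ and is transitive. I would present the short version using transitivity and Proposition~\ref{prop:leT-legammaT}.
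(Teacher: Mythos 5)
Your proof is correct and follows essentially the same route as the paper: apply Proposition~\ref{prop:leT-legammaT} to $B \le_T A$ to get $B \legammaT A$, then use transitivity of $\legammaT$ to conclude $B \legammaT \emptyset$. The direct alternative you sketch is also fine, but the short version you chose is exactly the paper's argument.
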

\begin{proof}
  Since $B \le_T A$ implies $B \legammaT A$ by Proposition~\ref{prop:leT-legammaT},
  we have $B \legammaT A \legammaT \emptyset$.
\end{proof}

\begin{prop} \label{lem:upper}
  Suppose $\gamma$ is $(2, A)$-divisible and $A \le_T B$.
  If $C \legammaT B$, then $C \le_T B$.
\end{prop}
\begin{proof}
  Since $A \le_T B$, $\gamma$ is also $(2, B)$-divisible
  by Proposition~\ref{lem:leT-lifts-divisibility}.  It now follows from
  Theorem~\ref{thm:legammaT-div-leT} that $C \le_T B$.
\end{proof}

For the numbering $\gamma$ of the $A$-c.e.\ sets, we can now characterize the
relation $\leq_\gamma$ on the upper and lower cone of $A$ as follows:

\begin{theorem}
  Fix $A$, and let $\gamma$ be one of $W^A_{(-)}$ and $\varphi^A_{(-)}$.
  Then
  \begin{enumerate}[\rm (i)]
    \item every $B\leq_T A$ is $\gamma$-low,

    \item for all $B,C\geq_T A$, we have $B\leq_\gamma C$ if and only if
      $B\leq_T C$.

  \end{enumerate}
\end{theorem}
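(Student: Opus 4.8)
The plan is to derive both parts directly from the machinery already assembled, using the key facts about $\varphi^A_{(-)}$ and $W^A_{(-)}$ established in Examples~\ref{ex:low} and~\ref{ex:divisible}.

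For part (i), I would first recall from Example~\ref{ex:low} that $A$ itself is $\gamma$-low for $\gamma \in \{\varphi^A_{(-)}, W^A_{(-)}\}$: the $S$-$m$-$n$ theorem gives, for any $A$-computable $f$, a primitive recursive (hence computable) $g$ with $\varphi^A_{f(n)} = \varphi^A_{g(n)}$, and the case of $W^A_{(-)}$ follows since it is a quotient of $\varphi^A_{(-)}$ via Proposition~\ref{lem:low-quotient}. Now for any $B \le_T A$, Proposition~\ref{lem:lower} immediately yields that $B$ is $\gamma$-low as well. So part (i) is essentially a one-line invocation.

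For part (ii), the direction $B \le_T C \Rightarrow B \le_\gamma C$ is just Proposition~\ref{prop:leT-legammaT}, which holds for every numbering. For the converse, suppose $B \le_\gamma C$ with $B, C \ge_T A$. By Example~\ref{ex:divisible}, both $\varphi^A_{(-)}$ and $W^A_{(-)}$ are $(\omega, A)$-divisible, in particular $(2, A)$-divisible. Since $A \le_T C$, Proposition~\ref{lem:upper} (with the roles $C \legammaT B$ there instantiated as $B \legammaT C$ here) applies and gives $B \le_T C$. Concretely: $A \le_T C$ lifts the $(2,A)$-divisibility to $(2,C)$-divisibility by Proposition~\ref{lem:leT-lifts-divisibility}, and then Theorem~\ref{thm:legammaT-div-leT} converts $B \legammaT C$ into $B \le_T C$.

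I do not anticipate a genuine obstacle here — the theorem is a packaging of earlier results — but the one point requiring care is making sure the divisibility hypotheses are available at the right oracle. The divisibility of $\varphi^A_{(-)}$ and $W^A_{(-)}$ is only established relative to $A$ in Example~\ref{ex:divisible}, so the argument in part (ii) genuinely needs $A \le_T C$ (equivalently $C \ge_T A$) in order to push the divisibility up to $C$ before applying Theorem~\ref{thm:legammaT-div-leT}; this is exactly why the statement restricts to the upper cone of $A$. One should also note that the argument is symmetric in $B$ and $C$ only in the hypothesis $B, C \ge_T A$, not in the conclusion, so the two implications of the "if and only if" must be handled by the two separate propositions as indicated.
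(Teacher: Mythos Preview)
Your proposal is correct and follows essentially the same route as the paper: part~(i) is Example~\ref{ex:low} combined with Proposition~\ref{lem:lower}, and part~(ii) is Proposition~\ref{prop:leT-legammaT} for one direction and Example~\ref{ex:divisible} together with Proposition~\ref{lem:upper} for the other. Your additional unpacking of Proposition~\ref{lem:upper} into Proposition~\ref{lem:leT-lifts-divisibility} and Theorem~\ref{thm:legammaT-div-leT}, and your remark on why the restriction $C\ge_T A$ is needed, are accurate and helpful but not substantively different from the paper's argument.
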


\begin{proof}
  By Example~\ref{ex:low}, $A$ is $\gamma$-low, so item (i) immediately
  follows from Proposition~\ref{lem:lower}.

  For item (ii), as we have seen in Example~\ref{ex:divisible}, $\gamma$ is
  $(\omega, A)$-divisible, hence certainly $(2, A)$-divisible.
  Suppose that $B, C \geq_T A$.
  If $B\leq_\gamma C$ then $B\leq_T C$ by Proposition~\ref{lem:upper}.
  The converse always holds by Proposition~\ref{prop:leT-legammaT}.
\end{proof}

\subsection{Ceers and c.e.\ numberings} \label{sec:ceers}

Every numbering $\gamma$ gives rise to the equivalence relation $\simgamma$.
A particularly interesting class of numberings are those for
which $\simgamma$ is a computably enumerable equivalence relation,
typically called a \emph{ceer\/}.
These were already studied by Ershov~\cite{Ershov}, who called
them \emph{positive\/} numberings.
Every ceer, in turn, gives rise to a c.e.\ numbering by sending
$n \in \omega$ to its equivalence class.
Ceers have been studied extensively in recent years, see for example
Andrews, Badaev, and Sorbi~\cite{AndrewsBadaevSorbi},
Andrews et al.~\cite{Andrewsetal},
Andrews and Sorbi~\cite{AndrewsSorbi2018, AndrewsSorbi2019},
and the references mentioned in these papers.

\begin{definition}
  A numbering $\gamma$ is \emph{c.e.\/} if the set
  \[
    \compr{ \tuple{x, y} }{ x, y \in \omega, x \simgamma y }
  \]
  is computably enumerable.
\end{definition}

Since each equivalence class of a c.e.\ numbering is c.e., the notion of
$n$-divisibility is trivial for finite $n$ provided that there are sufficiently
many equivalence classes.  However, this does not extend to the infinitary case,
since it may not be possible to enumerate an infinite subset of the equivalence
classes without repetition.

\begin{prop}
  If $\gamma$ is a c.e.\ numbering with at least $n \in \omega$ equivalence
  classes then $\gamma$ is $n$-divisible.
\end{prop}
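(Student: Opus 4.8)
The plan is to extract, uniformly in $x$, a c.e.\ index for the equivalence class $[x]_{\simgamma}$, and then simply to pick representatives from $n$ distinct classes — which is possible exactly because there are at least $n$ of them, and unproblematic because for finite $n$ every finite sequence of naturals is trivially a computable sequence.

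Concretely, since $\gamma$ is c.e.\ the set $R = \compr{\tuple{x,y}}{x \simgamma y}$ is c.e., so by the $S$-$m$-$n$ theorem there is a computable function $x \mapsto e_x$ with $W_{e_x} = \compr{y}{\tuple{x,y}\in R} = [x]_{\simgamma}$; in particular every equivalence class of $\gamma$ is c.e. Now, because $\gamma$ has at least $n$ equivalence classes, we may fix naturals $x_0,\dots,x_{n-1}$ lying in $n$ pairwise distinct classes, i.e.\ with $x_i \not\simgamma x_j$ whenever $i \neq j$, and put $e_i = e_{x_i}$, so that $W_{e_i} = [x_i]_{\simgamma}$. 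Since $n$ is finite, the tuples $(x_i)_{i<n}$ and $(e_i)_{i<n}$ are (finite objects, hence) computable sequences. Condition (i) of Definition~\ref{def:divisible} holds because $[x_i]_{\simgamma} = W_{e_i}$, and condition (ii) holds because for distinct $i,j$ we have $[x_i]_{\simgamma}\cap W_{e_j} = [x_i]_{\simgamma}\cap [x_j]_{\simgamma} = \emptyset$, these being distinct classes of the same equivalence relation.

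There is no real obstacle here: the one point to be careful about — and the reason the statement is restricted to finite $n$ — is that the argument is entirely non-uniform. We do not compute the witnesses from an index for $\gamma$; we merely assert that suitable $x_i$ exist, and a finite list of naturals is computable regardless. This is precisely what breaks in the infinitary case discussed just before the statement, where one would need to enumerate infinitely many classes without repetition, which a c.e.\ numbering need not allow.
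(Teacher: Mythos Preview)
Your proof is correct and follows essentially the same approach as the paper's: pick $n$ pairwise non-$\gamma$-equivalent elements and use that each equivalence class is c.e.\ because $\simgamma$ is. Your version is simply more explicit, spelling out the $S$-$m$-$n$ step and the verification of conditions (i) and (ii), and adding the helpful remark about why the argument does not extend to $n=\omega$.
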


\begin{proof}
  Select $n$ non-$\gamma$-equivalent elements $x_i$ and note that for all $0 \le
  i < n$, the set
  \[
    \compr{ y \in \omega }{ x_i \simgamma y }
  \]
  is computably enumerable.
\end{proof}

\begin{corollary}
  For every c.e.\ numbering $\gamma$ with at least two equivalence classes, the notions $A
  \le_T B$ and $A \legammaT B$ are equivalent.
\end{corollary}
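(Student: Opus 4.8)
The plan is to assemble the corollary from results already in hand, with essentially no new work. One direction is immediate: Proposition~\ref{prop:leT-legammaT} already tells us that $A \le_T B$ implies $A \legammaT B$ for \emph{every} numbering $\gamma$, so that half needs no argument specific to c.e.\ numberings.

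For the converse, suppose $A \legammaT B$. The proposition immediately preceding this corollary shows that a c.e.\ numbering with at least $n$ equivalence classes is $n$-divisible; applying it with $n = 2$, we get that $\gamma$ is $2$-divisible, i.e.\ $(2,\emptyset)$-divisible. Since $\emptyset \le_T B$, Proposition~\ref{lem:leT-lifts-divisibility} upgrades this to $(2,B)$-divisibility. Now Theorem~\ref{thm:legammaT-div-leT} applies verbatim and yields $A \le_T B$. Combining the two directions gives the stated equivalence.

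I do not anticipate a genuine obstacle: the corollary is a bookkeeping exercise chaining together the preceding proposition, Proposition~\ref{lem:leT-lifts-divisibility}, Theorem~\ref{thm:legammaT-div-leT}, and Proposition~\ref{prop:leT-legammaT}. The only points deserving a moment's attention are that "at least two equivalence classes" is precisely what is needed to invoke $2$-divisibility, and that relativizing divisibility from $\emptyset$ to an arbitrary oracle $B$ is harmless --- both of which are direct consequences of the cited results.
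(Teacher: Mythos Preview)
Your proposal is correct and follows essentially the same route as the paper, which simply cites Proposition~\ref{prop:leT-legammaT} and Theorem~\ref{thm:legammaT-div-leT}. You are in fact slightly more careful than the paper in making explicit the passage from $(2,\emptyset)$-divisibility to $(2,B)$-divisibility via Proposition~\ref{lem:leT-lifts-divisibility}, which the paper leaves implicit.
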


\begin{proof}
  This follows immediately from Proposition~\ref{prop:leT-legammaT}
  and~\ref{thm:legammaT-div-leT}.
\end{proof}

\begin{theorem}
  There exists a c.e.\ numbering $\gamma$ with $\omega$ equivalence classes
  that is not $\omega$-divisible.\footnote{Note added in proof:
  in the terminology of Andrews and Sorbi\cite{AndrewsSorbi2019},
  this says that there exists a ceer that is not light.
  This theorem is thus also a corollary of Theorem 3.3 from~\cite{AndrewsSorbi2019}}
\end{theorem}

In Proposition~\ref{thm:ce-precomplete-divisible} we will see that such a $\gamma$
cannot be precomplete.

\begin{proof}
  We construct a c.e.\ set of pairs $A$ and let $\simgamma$ be the least
  equivalence relation containing $A$.  The numbering $\gamma$ then arises by
  sending every $n \in \omega$ to its $\simgamma$-equivalence class.

  To guarantee that $\gamma$ is not $\omega$-divisible, we ensure that for every
  total computable function $f$ there is some equivalence class of $\gamma$ that
  appears in the output of $f$ at least twice, and thus that taking $x_i = f(i)$
  does not satisfy requirement (ii) in Definition~\ref{def:divisible}.
  We do this by ensuring the following requirements are satisfied:
  \[
    R_e : \text{$f = \varphi_e$ total and injective}
      \implies \exists i, j.\, \tuple{f(i), f(j)} \in A.
  \]

  Let $A_s$ be the set of pairs enumerated up to stage $s$.
  Throughout the construction, we keep track of a function $g_s$ that maps
  $n$ to the least requirement $e$ that has contributed a value to the
  equivalence class of $n$ in $A_s$, or $\infty$ if there is no such
  requirement.
  We also track which requirements have acted.

  At stage $s = 0$, set $A_0$ to be empty.

  At stage $s+1$, we say that a requirement $R_e$ needs attention if it has not
  acted previously and there exist $i, j$ such that $x = \varphi_{e, s}(i)\terminates$
  and $y = \varphi_{e, s}(j)\terminates$ and $g_s(x), g_s(y) > e$.

  If no requirement $R_e$, with $e < s$, requires attention, do nothing for this
  stage.  Otherwise, let $e < s$ be least such that $R_e$ requires attention,
  enumerate $\tuple{x, y}$ into $A$, and mark $R_e$ as having acted.  This
  completes the construction.

  To see that the requirements are satisfied, it suffices to show that if $f =
  \varphi_e$ is total and injective, then $R_e$ eventually acts, since any
  requirement that acts is clearly satisfied.  Suppose the antecedent of $R_e$
  is satisfied.  By induction, there is some stage $s$ after which no
  requirement $d < e$ acts and thus $g^{-1}_s(\set{0, \ldots, e-1})$ is fixed.
  Since $f$ is total and injective it is also unbounded, and thus there
  exist $i, j$ such that $g_s(f(i)), g_s(f(j)) > e$ and $R_e$ will thus
  eventually act.

  It remains to show that $A$ induces infinitely many equivalence classes in
  $\gamma$, which we will do by showing every equivalence class is finite.  Let
  $x \in \omega$.  If the equivalence class of $x$ is a singleton, then it is
  clearly finite.  Otherwise, there is some requirement $R_e$ that first
  extended the equivalence class of $x$.  By construction, every next extension
  was by some requirement $R_d$ with $d < e$, and since there are only finitely
  many such requirements, the equivalence class of $x$ was only extended
  finitely often and is thus finite.

  It follows that every equivalence class is finite, and thus that $\simgamma$
  has infinitely many equivalence classes.  However, since every computable
  function is non-injective modulo $\gamma$, it follows that $\gamma$ is not
  $\omega$-divisible.
\end{proof}

In practice, many ceers from the literature are in fact $\omega$-divisible.  In
particular, we will see in Proposition~\ref{thm:ce-precomplete-divisible} that all
precomplete ceers are $\omega$-divisible.
First, let us consider two examples from Visser~\cite{Visser}.

\begin{example} \label{ex:lambda}
  Let $M_{\lambda \beta}$ be the set of $\lambda$-calculus terms modulo
  $\beta$-equivalence and let $\gamma_\lambda$ be the numbering given by some effective
  coding of terms.

  Since $\beta$-equivalence is a c.e.\ relation, $\gamma_\lambda$ is c.e.
  To see that $\gamma_\lambda$ is $\omega$-divisible, note that by strong
  normalization and the diamond property, it suffices to enumerate any infinite
  set of non-equal terms in normal form.
  This can be done, for example, by taking the Church numerals
  $\lambda f.\lambda x. f^n(x)$.
\end{example}

\begin{example}
  Let $A$ be an arithmetical set and let $\gamma$ be the numbering induced by the
  equivalence relation
  \[
    n \simgamma m \iff \PA \vdash \forall x.\, \varphi^A_n(x) \simeq \varphi^A_m(x).
  \]

  Since we can enumerate proofs in $\PA$, $\gamma$ is a c.e.\ numbering.
  To see that $\gamma$ is $\omega$-divisible we can take $\varphi^A_{f(n)} =
  n$, which gives us an enumeration $f$ of codes of distinct functions.
  Since $\PA$ is sound, these codes are not identified by $\gamma$.

  Note that by Theorem~\ref{thm:legammaT-div-leT}, it follows that if $A$
  noncomputable then $A$ is not $\gamma$-low.
  This is in contrast with the numbering $\varphi^A_{(-)}$, where $A$ is always
  $\varphi^A_{(-)}$-low.
\end{example}

An analogue of the following result for wide numberings was already mentioned
by Selivanov~\cite{SelivanovSurvey}.

\begin{prop} \label{thm:ce-precomplete-divisible}
  Every precomplete c.e.\ numbering is $\omega$-divisible.\footnote{
An alternative way to phrase this is to say that every precomplete ceer is light, 
cf.\ footnote~\ref{fn}. 
This is also easy to see using the result of 
Bernardi and Sorbi~\cite{BernardiSorbi} that every precomplete ceer is 
universal, so that in particular the identity relation reduces to it. 
The latter property is equivalent to being light.}
\end{prop}

\begin{proof}
  This follows from a theorem due to Lachlan~\cite{Lachlan}, stating that every
  two precomplete c.e.\ numberings are recursively isomorphic.
  Let $\gamma$ be a precomplete c.e.\ numbering.
  By Lachlan's theorem $\gamma$ is recursively isomorphic to $\gamma_\lambda$
  from Example~\ref{ex:lambda} above.
  There thus exists a recursive permutation $h$ such that $\gamma_\lambda(n) =
  \gamma_\lambda(m)$ iff $\gamma(n) = \gamma(m)$.
  Taking $(x_n)_{n \in \omega}$ to be the enumeration of Church numerals,
  $(h(x_n))_{n \in \omega}$ witnesses that $\gamma$ is $\omega$-divisible as
  well.
\end{proof}

\section{Relative precompleteness}

The following is Ershov's version of the recursion theorem for precomplete
numberings~\cite{Ershov2}. In relativized form it reads as follows:

\begin{theorem}[Recursion theorem for precomplete numberings]
  If $\gamma$ is an $A$-precomplete numbering then every $A$-computable function
  $f$ has a fixpoint modulo $\gamma$; that is, for every $A$-computable $f$
  there is an $n \in \omega$ such that $f(n) \simgamma n$.
\end{theorem}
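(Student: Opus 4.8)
The plan is to adapt the standard proof of Kleene's recursion theorem, using $A$-precompleteness as a substitute for the $S$-$m$-$n$ theorem. First I would fix an $A$-computable function $f$ whose fixpoint we seek. The key idea is to construct a partial $A$-computable function that encodes the ``diagonal'' behaviour, totalize it using precompleteness, and then apply the usual diagonal argument.

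Concretely, I would proceed as follows. By the $S$-$m$-$n$ theorem (relativized to $A$), there is a total $A$-computable function $d$ such that for all $e$,
\[
  \varphi^A_{d(e)}(n) \simeq \varphi^A_{\varphi^A_e(e)}(n)
\]
whenever $\varphi^A_e(e)\terminates$; more precisely, I want an $A$-computable function that, on input $e$ with $\varphi^A_e(e)\terminates$, outputs (a number for) $f(\varphi^A_e(e))$. So consider the partial $A$-computable function $\psi$ defined by $\psi(e) = f(\varphi^A_e(e))$ if $\varphi^A_e(e)\terminates$, and undefined otherwise; this is partial $A$-computable since $f$ is $A$-computable. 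By $A$-precompleteness of $\gamma$, let $t$ be a total $A$-computable totalization of $\psi$ modulo $\gamma$, so that $\varphi^A_e(e)\terminates$ implies $t(e) \simgamma f(\varphi^A_e(e))$. Now $t$ is total $A$-computable, so by the relativized $S$-$m$-$n$ theorem there is an index $v$ with $\varphi^A_v(e) = t(e)$ for all $e$; in particular $\varphi^A_v(v)\terminates$. Setting $n = t(v) = \varphi^A_v(v)$, we get $\varphi^A_v(v)\terminates$, hence $n = t(v) \simgamma f(\varphi^A_v(v)) = f(t(v)) = f(n)$, so $n$ is a fixpoint of $f$ modulo $\gamma$.

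I expect the main obstacle to be purely bookkeeping: keeping the distinction between numbers, the p.c.\ functions they index, and their $\gamma$-classes straight, and making sure that the totalization step is applied to the right partial $A$-computable function so that the final substitution $n = \varphi^A_v(v) = t(v)$ lands inside the domain where the totalization agrees (modulo $\gamma$) with $\psi$. Since the definition of $A$-precompleteness only guarantees $t(e) \simgamma \psi(e)$ when $\psi(e)\terminates$, one must check that $\psi(v)\terminates$, which holds because $\varphi^A_v(v) = t(v)\terminates$ as $t$ is total. No genuinely hard step arises; the content is entirely in recognizing that precompleteness is exactly the hypothesis needed to run Kleene's diagonal argument in this more general setting, a point already made by Ershov.
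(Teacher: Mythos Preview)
Your argument is correct and is precisely the standard diagonal proof due to Ershov. The paper itself does not spell out a proof of this theorem, merely citing Ershov and later remarking that the parameterized version (Theorem~\ref{recthmparam}) holds ``with the same proof''; your argument is exactly what is intended. One cosmetic point: the function $d$ you introduce in the first paragraph is never used, and obtaining an index $v$ with $\varphi^A_v = t$ is not an application of the $S$-$m$-$n$ theorem but simply the fact that $\varphi^A_{(-)}$ enumerates all partial $A$-computable functions.
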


Kleene's second recursion theorem (cf.\ Moschovakis~\cite{Moschovakis} for the
rich history of this result) is a uniform, parameterized version of the first
recursion theorem.
It also holds for precomplete numberings (with the same proof), and reads as
follows:

\begin{theorem}[Recursion theorem with parameters] \label{recthmparam}
  If $\gamma$ is an $A$-pre\-com\-plete numbering then for every binary
  $A$-computable function $h$ there is a unary $A$-computable function $f$ such
  that for all $n \in \omega$,
  \[
    h(n, f(n)) \simgamma f(n).
  \]
\end{theorem}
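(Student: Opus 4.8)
The plan is to mimic the classical proof of Kleene's recursion theorem with parameters, adapting each step to the relative-precompleteness setting. Given a binary $A$-computable function $h$, I would first construct, via the $S$-$m$-$n$-theorem relative to $A$, a total $A$-computable function that plays the role of a ``diagonalization'' index-manipulator. Concretely, fix a standard numbering $\varphi^A_{(-)}$ of the partial $A$-computable functions and fix an $A$-computable ``coding'' map $c$ such that $\varphi^A_{c(e)} = \gamma \circ (\text{the function computed from } e)$ — more precisely, I want to work with a fixed total $A$-computable function $d$ so that $\varphi^A_{d(e)}(n) \simeq h(n, \varphi^A_e(n))$ for all $e,n$. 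Such a $d$ exists by the $S$-$m$-$n$-theorem, since $(e,n) \mapsto h(n, \varphi^A_e(n))$ is partial $A$-computable.

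Next I would invoke $A$-precompleteness of $\gamma$. The partial $A$-computable function $\psi(e) = \varphi^A_{d(e)}(e)$ (i.e. $\psi(e) \simeq h(e, \varphi^A_e(e))$ when defined) has a totalization $t$ modulo $\gamma$: a total $A$-computable $t$ with $\psi(e)\terminates \implies \psi(e) \simgamma t(e)$. Now apply the $S$-$m$-$n$-theorem once more to get a total $A$-computable $s$ with $\varphi^A_{s(e)}(n) \simeq t(e)$ for all $n$, so that $s(e)$ is an index for the constant function with value $t(e)$. By the ordinary (non-relative) recursion theorem applied to the $A$-computable function $s$ — or rather by taking a fixed point of $s$ in the standard numbering, which is available since $\varphi^A_{(-)}$ is $A$-precomplete by Example~\ref{ex:low} and the recursion theorem above — there is an index $e_0$ with $\varphi^A_{e_0} = \varphi^A_{s(e_0)}$, hence $\varphi^A_{e_0}(n) \simeq t(e_0)$ for all $n$; in particular $\varphi^A_{e_0}(e_0)\terminates$. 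Set $f$ to be the total $A$-computable function $f(n) = t(e_0)$ (constant). Then $\psi(e_0) = \varphi^A_{d(e_0)}(e_0) \simeq h(e_0, \varphi^A_{e_0}(e_0)) \simeq h(e_0, t(e_0))$ is defined, so $h(e_0, t(e_0)) \simgamma t(e_0) = f(e_0)$. To get the statement for \emph{all} $n$ rather than a single $e_0$, I would instead carry the parameter $n$ through the whole argument: replace $h$ by $h' (e,n) \mapsto h(n, \varphi^A_e(n))$ and run the fixed-point construction uniformly in a hidden parameter, producing $f(n)$ with $h(n,f(n)) \simgamma f(n)$, exactly as in the classical parametrized recursion theorem where one builds an index depending on an auxiliary input.

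The routine bookkeeping — keeping track of which index depends on which parameter, and verifying that the totalization interacts correctly with the $\simgamma$-equivalence at the one point where the underlying partial function converges — is where care is needed but no real obstacle arises. The one genuine subtlety, and the step I expect to be the crux, is ensuring that $\simgamma$ (rather than literal equality of indices) suffices at every application of $S$-$m$-$n$ and of the fixed-point lemma: the totalization $t$ only guarantees $\psi(e)\simgamma t(e)$ on convergence, so I must make sure the fixed-point index $e_0$ is obtained in a way that forces the relevant computation $\varphi^A_{d(e_0)}(e_0)$ to converge, which is exactly what the constant-function trick (via $s$) secures. Once that is in place, the congruence $h(n,f(n)) \simgamma f(n)$ follows by chasing the chain of $\simeq$ and $\simgamma$ relations, and the proof is complete.
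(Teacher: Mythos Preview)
Your route can be completed, but it is both more circuitous than necessary and, as written, leaves the actual content of the theorem to a hand-wave. The paper does not spell out a proof here; it just says the parametrized statement follows ``with the same proof'' as Ershov's fixpoint theorem. That standard proof is a single diagonalization: totalize the partial $A$-computable function
\[
  \psi(n,x) \;\simeq\; h\bigl(n,\varphi^A_x(n,x)\bigr)
\]
modulo $\gamma$ to obtain a total $A$-computable $g(n,x)$, pick an index $e$ with $\varphi^A_e = g$, and set $f(n) = g(n,e)$. Then $\varphi^A_e(n,e) = g(n,e)$ converges, so $\psi(n,e) = h(n,f(n))$ converges and $h(n,f(n)) \simgamma g(n,e) = f(n)$. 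No appeal to Kleene's recursion theorem for $\varphi^A_{(-)}$ is needed; the diagonal $\varphi^A_x(n,x)$ is built directly into $\psi$, and the only ingredients are $A$-precompleteness of $\gamma$ and the fact that a total $A$-computable function has an index.

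Your proposal instead totalizes $h(n,\varphi^A_e(n))$ (no diagonal), then manufactures the fixed point via an additional application of the recursion theorem for $\varphi^A_{(-)}$. Once the parameter $n$ is threaded through correctly this does work, but (a) the detour through Kleene's theorem is unnecessary overhead, and (b) your write-up only carries out the unparametrized case in detail and then says ``carry the parameter $n$ through the whole argument'' --- which is precisely the step that distinguishes Theorem~\ref{recthmparam} from the preceding theorem, so it should not be deferred to bookkeeping. A minor point: Example~\ref{ex:low} concerns $\gamma$-lowness, not precompleteness; the fact you need (that $\varphi^A_{(-)}$ is $A$-precomplete) is the relativized $S$-$m$-$n$ theorem, not that example.
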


Arslanov~\cite{Arslanov} generalized the recursion theorem from computable functions
to the class of functions that are computable from an incomplete c.e.\ set.
Phrased differently: A c.e.\ set is Turing complete if and only if it can compute
a fixpoint free function.
This is Arslanov's completeness criterion.
Selivanov~\cite{Selivanov} (see also \cite{BarendregtTerwijn})
proved that Arslanov's theorem also holds for any precomplete numbering:

\begin{theorem}[Arslanov's completeness criterion for precomplete numberings]
  \label{thm:arslanov}
  If $A$ is Turing incomplete and c.e.\ and $\gamma$ is a precomplete numbering,
  then every $A$-computable function $f$ has a fixpoint modulo $\gamma$.
\end{theorem}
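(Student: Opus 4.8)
The plan is to argue by contradiction, along the lines of the classical proof of Arslanov's completeness criterion. Suppose $\gamma$ is precomplete, $A$ is c.e.\ and Turing incomplete, and $f$ is an $A$-computable function with \emph{no} fixpoint modulo $\gamma$, that is, $f(n)\not\simgamma n$ for every $n$. From this I will derive $\emptyset'\le_T A$, contradicting the incompleteness of $A$, so that no such $f$ can exist. The heart of the argument is to use $f$ to compute, with oracle $A$, a \emph{modulus} for $\emptyset'$: a function $g\le_T A$ such that $x\in\emptyset'$ implies $x\in\emptyset'_{g(x)}$, where $(\emptyset'_s)_{s\in\omega}$ is a fixed computable enumeration of $\emptyset'$. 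Since then $x\in\emptyset'$ if and only if $x\in\emptyset'_{g(x)}$, any such $g$ witnesses $\emptyset'\le_T A$.

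To build $g$ I would first fix suitable approximations. Because $A$ is c.e.\ and $f\le_T A$, there is a computable approximation $(f_s)_{s\in\omega}$ of $f$ together with an $A$-computable function $\sigma$ such that $f_s(m)=f(m)$ for all $s\ge\sigma(m)$; this is standard, e.g.\ taking $f_s(m)=\Phi^{A_s}_s(m)$ for a fixed reduction $f=\Phi^A$ and letting $\sigma(m)$ be a stage, found with oracle $A$, by which the use of $\Phi^A(m)$ has become $A$-correct and the computation has converged. Now let $h$ be the partial computable function that, on input $(x,m)$, searches for the least $s$ with $x\in\emptyset'_s$ and, if such an $s$ is found, outputs $f_s(m)$ (and otherwise diverges). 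By precompleteness of $\gamma$, $h$ has a totalization $\hat h$ modulo $\gamma$: a total computable function of two arguments with $h(x,m)\terminates\implies h(x,m)\simgamma\hat h(x,m)$. Applying the recursion theorem with parameters for $\gamma$ (Theorem~\ref{recthmparam} with $A=\emptyset$) to $\hat h$ yields a computable function $p$ with $\hat h(x,p(x))\simgamma p(x)$ for all $x$.

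I claim that $g:=\sigma\circ p$ is a modulus for $\emptyset'$. Suppose $x\in\emptyset'$ and let $s_0$ be the stage at which $x$ enters $\emptyset'$. Then $h(x,p(x))\terminates$ with value $f_{s_0}(p(x))$, so $p(x)\simgamma\hat h(x,p(x))\simgamma h(x,p(x))=f_{s_0}(p(x))$. If we had $s_0\ge\sigma(p(x))$ then $f_{s_0}(p(x))=f(p(x))$, whence $p(x)\simgamma f(p(x))$ --- a fixpoint of $f$ modulo $\gamma$, contradicting our assumption. Hence $s_0<\sigma(p(x))$, so $x\in\emptyset'_{\sigma(p(x))}=\emptyset'_{g(x)}$, as needed. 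Since $p$ is computable and $\sigma$ is $A$-computable, $g$ is $A$-computable, and therefore $\emptyset'\le_T A$, the required contradiction.

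The one genuinely nonobvious point, I expect, is the decision to aim for a modulus for $\emptyset'$ rather than to try to manufacture a $\gamma$-copy of $f(p(x))$ directly; the latter would amount to computing the limit of the unstable approximation $(f_s)$ with a plain partial computable function, which is impossible. Once that idea is in place the remaining pieces are routine: the $A$-computable stabilization function $\sigma$ is a standard consequence of $A$ being c.e., and precompleteness of $\gamma$ enters in just two controlled places --- to totalize $h$ modulo $\gamma$, and to make the recursion theorem with parameters available for the empty oracle --- so that the hypothesis ``$\gamma$ precomplete'' (rather than ``$A$-precomplete'') is exactly what is required. The only bookkeeping to check is that $h$, $\hat h$ and $p$ are obtained uniformly, which holds since both the enumeration of $\emptyset'$ and the approximation $(f_s)$ are computable.
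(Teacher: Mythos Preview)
Your argument is correct. The paper does not actually give a proof of Theorem~\ref{thm:arslanov} (it is stated as a known result of Selivanov); the closest proof in the paper is that of the parameterized version, Theorem~\ref{thm:arslanovparam}, and your argument is essentially the unparameterized specialization of that proof---same computable approximation of the $A$-computable function with $A$-computable modulus, same p.c.\ function that waits for an element to enter $\emptyset'$ and then outputs the current approximate value, same totalization via precompleteness followed by the recursion theorem with parameters, and the same derivation of $\emptyset'\le_T A$ from fixpoint-freeness. The only organizational difference is that you run the whole proof as a contradiction from the start, whereas the paper constructs the fixpoint directly and invokes the contradiction only to show its auxiliary search function $r$ is total; the underlying mechanism is identical.
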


At first glance, we may hope to prove Arslanov's completeness criterion by
showing that that if $A$ is incomplete and c.e.\ and $\gamma$ is precomplete,
then $\gamma$ is also $A$-precomplete.
In the remainder of this section we show that this is not the case for
precomplete $\omega$-divisible numberings.
As we have seen in the previous section, this includes the numberings $W_{(-)}$
and $\varphi_{(-)}$.

\begin{lemma}
  \label{lem:ucomp-low}
  If an $A$-precomplete numbering $\gamma$ is $2$-divisible and the totalization
  $u$ of the universal function modulo $\gamma$ is computable, then $A$ is
  $\gamma$-low and therefore computable.
\end{lemma}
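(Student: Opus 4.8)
The goal is: given an $A$-precomplete, $2$-divisible numbering $\gamma$ whose totalization $u$ of the universal function modulo $\gamma$ is computable, conclude that $A\legammaT\emptyset$, and then apply Theorem~\ref{thm:legammaT-div-leT} to get $A$ computable. The plan is to show that an arbitrary $A$-computable function $f$ has a $\gamma$-lift to $\emptyset$, using $u$ to do the work.

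The key idea is that an $A$-computable function $f$ can be regarded as an $A$-p.c.\ function, hence (using the $S$-$m$-$n$ theorem for the $A$-computable universal function) there is a \emph{computable} function $p$ such that $\varphi^A_{p(n)}(0)\terminates$ with value $f(n)$ for every $n$ — more precisely, $p(n)$ is an index for the constant $A$-p.c.\ function with value $f(n)$. Now the point is that the universal function for $A$-p.c.\ functions, call it $\psi(\langle e,m\rangle)\simeq\varphi^A_e(m)$, is $A$-partial computable, so by $A$-precompleteness it has a totalization $u$ modulo $\gamma$, which by hypothesis is computable; thus $u(\langle p(n),0\rangle)\simgamma\varphi^A_{p(n)}(0)=f(n)$, since the latter computation converges. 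Setting $g(n)=u(\langle p(n),0\rangle)$ gives a computable function with $g(n)\simgamma f(n)$ for all $n$, i.e.\ a $\gamma$-lift of $f$ to $\emptyset$. Since $f$ was an arbitrary $A$-computable function, $A\legammaT\emptyset$, that is, $A$ is $\gamma$-low. Finally, $2$-divisibility of $\gamma$ (which is $(2,\emptyset)$-divisibility, hence $(2,\emptyset)$-divisibility with $B=\emptyset$) lets us invoke Theorem~\ref{thm:legammaT-div-leT} with $B=\emptyset$ to conclude $A\le_T\emptyset$, i.e.\ $A$ is computable.

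The main point to be careful about is the exact meaning of "the totalization $u$ of the universal function modulo $\gamma$": one must fix a universal $A$-p.c.\ function $\psi$ with $\psi(\langle e,m\rangle)\simeq\varphi^A_e(m)$, note that it is $A$-partial computable, and then $u$ is the $A$-computable (here: computable, by hypothesis) totalization guaranteed by $A$-precompleteness, satisfying $\psi(k)\terminates\implies\psi(k)\simgamma u(k)$. The only real content is the observation that feeding $u$ the index of a constant function recovers (modulo $\gamma$) that constant — everything else is bookkeeping with the $S$-$m$-$n$ theorem and the definitions of $\gamma$-low and of divisibility. There is no genuine obstacle; the step that requires the most attention is simply checking that $\varphi^A_{p(n)}(0)$ really does converge so that the implication defining $u$ applies, which holds because $f$ is total.
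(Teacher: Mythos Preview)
Your proof is correct and follows essentially the same approach as the paper: both use the computability of $u$ to build a computable $\gamma$-lift of an arbitrary $A$-computable $f$, and then invoke Theorem~\ref{thm:legammaT-div-leT}. The paper's argument is slightly more direct---it simply takes an index $e$ with $f=\varphi^A_e$ and sets $g(n)=u(e,n)$, rather than going through the $S$-$m$-$n$ theorem to produce a computable family of constant functions---but the underlying idea is identical.
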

\begin{proof}
  Let $\gamma$ be such a numbering and let $u$ be the totalization of the
  universal function.  Suppose $u$ is computable.  Let $f = \varphi^A_e$ be a
  total $A$-computable function.
  Define a computable function $g$ by
  \[
    g(n) = u(e, n).
  \]

  Now for all $n$,
  \[
    g(n) = u(e, n) \simgamma \varphi^A_e(n) = f(n)
  \]
  and thus $A$ is $\gamma$-low.  By Theorem~\ref{thm:legammaT-div-leT}, $A$ is
  computable.
\end{proof}

\begin{theorem}
  \label{thm:omegasep-Aprecomplete-Acomp}
  Let $A \le_T B$.
  If a numbering $\gamma$ is $(\omega, A)$-divisible and $\gamma$ is
  $B$-precomplete, then $A \equiv_T B$.
\end{theorem}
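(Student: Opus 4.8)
The plan is to reduce the statement to a lowness assertion and then feed it into the machinery of Section~\ref{sec:numberings}. Since $A \le_T B$ is given, it suffices to prove $B \le_T A$. As $\gamma$ is $(\omega, A)$-divisible it is in particular $(2, A)$-divisible (restrict the witnessing sequences to their first two terms), so by Theorem~\ref{thm:legammaT-div-leT} it is enough to show that $B \legammaT A$; that is, that every $B$-computable function has a $\gamma$-lift to $A$. This should be seen as a strengthening of Lemma~\ref{lem:ucomp-low}: there one assumes the relevant totalization is outright computable, whereas here $B$-precompleteness only hands us a $B$-computable totalization, and the work lies in pulling it down to $A$.

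First I would set up notation. Fix $A$-computable sequences $(x_i)_{i < \omega}$ and $(e_i)_{i < \omega}$ witnessing the $(\omega, A)$-divisibility of $\gamma$, and record the decoding fact: if $z \simgamma x_i$ then $z \in W^A_{e_i}$ by property~(i) and $z \notin W^A_{e_j}$ for all $j \neq i$ by property~(ii), so $i$ is the unique index with $z \in W^A_{e_i}$ and oracle $A$ finds it from $z$ by dovetailing the enumerations of the $W^A_{e_j}$. On the other side, $B$-precompleteness provides a $B$-computable totalization $u$ of the universal $B$-computable function, with $\varphi^B_e(n) \terminates \implies u(e, n) \simgamma \varphi^B_e(n)$, and it makes the recursion theorem with parameters (Theorem~\ref{recthmparam}) available relative to $B$, also for partial $B$-computable operators once these have been totalized modulo $\gamma$.

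Then, for a fixed total $B$-computable $f = \varphi^B_e$, I would build an $A$-computable $\gamma$-lift $g$ of $f$ using the infinitely many pairwise separated classes $[x_i]_{\simgamma}$ as diagonalization targets. Via the recursion theorem with parameters relative to $B$ one constructs a partial $B$-computable operator $h(n, z)$ that, using only oracle $A$, searches for the index $i$ with $z \in W^A_{e_i}$ and responds in terms of this $i$ and of $u(e,n)$; property~(ii) of divisibility forces any fixpoint of $h(n, \cdot)$ into a single one of the sets $W^A_{e_i}$ — namely the one coding the $\simgamma$-class of $u(e,n)$ — because any off-target value would have to belong to and be excluded from the same $W^A_{e_j}$. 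Once the fixpoint is known to sit in exactly one $W^A_{e_i}$, the decoding fact lets oracle $A$ alone read off $i$, hence an element of $[u(e,n)]_{\simgamma}$; setting $g(n)$ to that element gives an $A$-computable $g$ with $g(n) \simgamma u(e,n) \simgamma \varphi^B_e(n) = f(n)$. Thus $B \legammaT A$, and Theorem~\ref{thm:legammaT-div-leT} upgrades this to $B \le_T A$, so $A \equiv_T B$.

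The main obstacle is the construction in the last paragraph: turning the merely $B$-computable value $u(e, n)$ into something oracle $A$ can compute modulo $\simgamma$. Finite divisibility is not enough for this; it is exactly the full $(\omega, A)$-divisibility — an $A$-uniform supply of pairwise separated classes into which to diagonalize — that lets the recursion-theoretic fixpoint be pinned to an $A$-readable class. The attendant bookkeeping, making the operator $h$ total (or totalizing it via $B$-precompleteness) without destroying the fact that its fixpoints land inside $\bigcup_i W^A_{e_i}$, is the delicate point.
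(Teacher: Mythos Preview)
Your reduction to showing $B \legammaT A$ (via $(2,A)$-divisibility and Theorem~\ref{thm:legammaT-div-leT}) is valid, but the fixpoint construction you sketch does not deliver an $A$-computable lift. The recursion theorem with parameters relative to $B$ (Theorem~\ref{recthmparam}) produces a $B$-computable fixpoint function $p(n)$; even if you arrange that each $p(n)$ lands in exactly one of the dividing sets $W^A_{e_i}$, your decoding step ``let oracle $A$ alone read off $i$'' presupposes that $A$ already has the value $p(n)$ in hand, and it does not. Whatever $A$-computable search you run over the $W^A_{e_j}$, you must feed it the argument $p(n)$, and computing that argument costs you $B$. So the resulting $g$ is still $B$-computable and you have gone in a circle. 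You flag the bookkeeping here as ``delicate'', but the obstruction is structural, not clerical: no choice of $h$ makes the $B$-relativized recursion theorem hand you something $A$ can locate without already knowing (enough of)~$B$.

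The paper supplies exactly the missing idea, and it is not a fixpoint argument but a \emph{bootstrapping} one. Let $u$ be the $B$-computable totalization of the universal partial $B$-computable function, let $U(e,x)$ be its use, and set $f(n) = \max_{i,j \le n} U(i,j)$. Choose $e$ so that $\varphi^B_e(n) = x_{g(n)}$, where $g(n)$ is a canonical code for $\{b \in B : b \le f(n+1)\}$; then $u(e,n) \simgamma x_{g(n)}$. The point is that once we possess the first $f(n)$ bits of $B$ we can simulate $u(e,n)$ \emph{without} the oracle, since those bits cover all its queries; the output lies in the dividing set for $x_{g(n)}$, and now the $A$-computable decoding legitimately recovers $g(n)$, i.e.\ the first $f(n+1)$ bits of $B$. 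Iterating from a finite seed, $A$ computes arbitrarily long initial segments of $B$, hence $B \le_T A$. (A side case disposes of $\lim_n f(n) < \infty$, where $u$ is outright computable and Lemma~\ref{lem:ucomp-low} applies.) The device you are missing is the use function of $u$: it converts ``$B$ computes $u(e,n)$'' into ``a known finite initial segment of $B$ computes $u(e,n)$'', and that is precisely what breaks the circularity your decoding step runs into.
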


\begin{proof}
  Let $A \le_T B$ and let $u$ be the totalization of a partial $B$-computable
  universal function modulo $\gamma$.  Let $(x_i)_{i \in \omega}$ and $(X_i)_{i
  \in \omega}$ witness that $\gamma$ is $(\omega, A)$-divisible.

  Let $U(e, x)$ be the use of $u(e, x)$ and define a $B$-computable function $f$
  by
  \[
    f(n) = \max_{\substack{i \le n\\ j \le n}} U(i, j).
  \]

  If $\lim_{n \to \infty} f(n)$ is finite then $u$ is computable and thus $B$ is
  $\gamma$-low by Lemma~\ref{lem:ucomp-low} and is thus computable by
  Theorem~\ref{thm:legammaT-div-leT}.  It follows that $A \equiv_T B \equiv_T
  \emptyset$.

  Suppose, then, that $\lim_{n \to \infty} f(n)$ diverges.  Let $D_{(-)}$ be the
  canonical numbering of finite sets.
  Since $f$ is $B$-computable, there is a $B$-computable function $g$ such that
  \[
    D_{g(n)} = \compr{ b \in B }{b \le f(n+1)}.
  \]

  Let $u'(e, n, m)$ be the computable function that acts like $u(e, n)$ but with
  queries to the oracle $B$ replaced by reads from $D_m$.

  Now fix $e$ to be a code of $n \mapsto x_{g(n)}$.
  Note that this function is $B$-computable, since $g$ is $B$-computable, the
  sequence $x_i$ is $A$-computable, and $A\le_T B$.
  Note that we have
  \begin{equation} \label{ug}
    u(e,n) \sim_\gamma \varphi^B_e(n) = x_{g(n)}.
  \end{equation}
  Define the partial $A$-computable function $\delta$ by
  \[
    \delta(n, m) = \text{least $i$ such that $u'(e, n, m) \in X_i$}.
  \]

  For any $n, m \in \omega$ with $n\geq e$, if $D_m$ agrees with $B$ on the
  first $f(n)$ bits, then $\delta(n, m)$ terminates after using at most the
  first $f(n)$ bits of $D_m$ and $D_{\delta(n, m)}$ agrees with $B$ on the first
  $f(n+1)$ bits.
  Namely, $u'(e,n,m) = u(e,n)$ since by assumption $D_m$ gives the correct
  answers to the oracle queries below $f(n)$, which includes the use of
  $u(e,n)$, and because by \eqref{ug} we have $u(e,n) \sim_\gamma x_{g(n)}$
  it follows that $u'(e, n, m) \in X_{g(n)}$ by the properties of the dividing
  sets $X_i$.
  Hence $\delta(n,m)\terminates = g(n)$.

  Let $D_c$ be the code of the first $f(e)$ bits of the oracle~$B$.
  By recursion, we can define
  \begin{align*}
    b(0) &= D_c\\
    b(k+1) &= \delta(k, b(k)).
  \end{align*}

  Now $b$ is an $A$-computable sequence of canonical codes of increasing initial
  segments of $B$, where $D_{b(n)}$ contains at least $f(n)$ bits if $n \ge e$.
  Since $\lim_{n \to \infty} f(n) = \infty$, eventually we can compute all of
  $B$ this way. Since $\delta$ is $A$-p.c., it follows that $B\leq_T A$.
\end{proof}

Putting this together with earlier results, we obtain the following equivalence.

\begin{theorem}
  \label{thm:W-phi-comp-equiv}
  For any precomplete $\omega$-divisible numbering $\gamma$, the following are equivalent:
  \begin{enumerate}
    \item $A$ is computable.
    \item $A$ is $\gamma$-low.
    \item $\gamma$ is $A$-precomplete.
  \end{enumerate}
  In particular, this is the case for $\gamma$ one of $W_{(-)}$ and $\varphi_{(-)}$,
\end{theorem}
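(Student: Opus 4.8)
The plan is to derive Theorem~\ref{thm:W-phi-comp-equiv} by chaining together the implications using the results already established. The equivalence is a cycle: $(1) \Rightarrow (2) \Rightarrow (3) \Rightarrow (1)$, though it is cleaner to prove the three implications $(1) \Rightarrow (3)$, $(3) \Rightarrow (2)$, and $(2) \Rightarrow (1)$, or some convenient rearrangement. Since $\gamma$ is assumed precomplete ($= \emptyset$-precomplete) and $\omega$-divisible, it is in particular $2$-divisible, which is the hypothesis needed for Theorem~\ref{thm:legammaT-div-leT} and Lemma~\ref{lem:ucomp-low}.

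First I would handle $(1) \Rightarrow (3)$: if $A$ is computable, then $A$-precomplete is the same as $\emptyset$-precomplete (every $A$-p.c.\ function is p.c.\ and vice versa), so $\gamma$ being precomplete gives that $\gamma$ is $A$-precomplete directly. Next, $(3) \Rightarrow (1)$ is exactly Theorem~\ref{thm:omegasep-Aprecomplete-Acomp} applied with $B = A$: taking $A \le_T A$ trivially, and using that $\gamma$ is $(\omega, A)$-divisible (which follows from $\omega$-divisibility of $\gamma$ together with Proposition~\ref{lem:leT-lifts-divisibility}, or is simply immediate since $\emptyset \le_T A$), we conclude $A \equiv_T A$ — wait, that is vacuous, so instead I would run Theorem~\ref{thm:omegasep-Aprecomplete-Acomp} more carefully: it says $A \le_T B$ and $(\omega,A)$-divisible and $B$-precomplete imply $A \equiv_T B$. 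Applying this with the roles $A := \emptyset$ and $B := A$, we have $\emptyset \le_T A$, $\gamma$ is $(\omega, \emptyset)$-divisible by hypothesis, and $\gamma$ is $A$-precomplete by (3); hence $\emptyset \equiv_T A$, i.e.\ $A$ is computable. This closes the loop between (1) and (3).

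Then for (2), note $(1) \Rightarrow (2)$ is immediate from Proposition~\ref{prop:leT-legammaT}: if $A$ is computable then $A \le_T \emptyset$, hence $A \legammaT \emptyset$, i.e.\ $A$ is $\gamma$-low. Conversely, $(2) \Rightarrow (1)$ follows from Theorem~\ref{thm:legammaT-div-leT} with $B = \emptyset$: $A \legammaT \emptyset$ together with $\gamma$ being $(2, \emptyset)$-divisible (a consequence of $\omega$-divisibility) gives $A \le_T \emptyset$. This gives all three implications needed for the equivalence. Finally, the "in particular" clause follows because, as established in Example~\ref{ex:divisible}, both $W_{(-)}$ and $\varphi_{(-)}$ are $\omega$-divisible, and both are precomplete by the classical results of Ershov.

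I do not anticipate a serious obstacle here, since the theorem is essentially a bookkeeping exercise assembling Theorem~\ref{thm:omegasep-Aprecomplete-Acomp}, Theorem~\ref{thm:legammaT-div-leT}, and Proposition~\ref{prop:leT-legammaT}. The one point requiring slight care is the direction $(3) \Rightarrow (1)$: one must apply Theorem~\ref{thm:omegasep-Aprecomplete-Acomp} with the correct assignment of oracles (the "lower" oracle being $\emptyset$ and the "upper" being $A$) so that the conclusion $\emptyset \equiv_T A$ is non-vacuous and actually yields computability of $A$. Everything else is a direct citation.
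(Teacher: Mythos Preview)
Your proposal is correct and follows essentially the same route as the paper: the implications $(1)\Rightarrow(2)$ via Proposition~\ref{prop:leT-legammaT}, $(2)\Rightarrow(1)$ via Theorem~\ref{thm:legammaT-div-leT}, $(1)\Rightarrow(3)$ from precompleteness directly, and $(3)\Rightarrow(1)$ via Theorem~\ref{thm:omegasep-Aprecomplete-Acomp} with the lower oracle $\emptyset$ and upper oracle $A$. Your care in choosing the correct oracle assignment for Theorem~\ref{thm:omegasep-Aprecomplete-Acomp} is exactly the point, and the ``in particular'' clause is handled as in the paper via Example~\ref{ex:divisible} and the known precompleteness of $W_{(-)}$ and $\varphi_{(-)}$.
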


\begin{proof}
  If $A$ is computable, by Proposition~\ref{prop:leT-legammaT} it is $\gamma$-low.
  Since $\gamma$ is already precomplete, it is also $A$-precomplete.

  If $A$ is $\gamma$-low, then by Theorem~\ref{thm:legammaT-div-leT} it is in
  fact computable.

  If $\gamma$ is $A$-precomplete, Theorem~\ref{thm:omegasep-Aprecomplete-Acomp}
  applies and again $A$ is computable.

  As we have seen $W_{(-)}$ and $\varphi_{(-)}$ are precomplete and
  $\omega$-divisible and thus satisfy the requirements of the theorem.
\end{proof}

Taking $A$ noncomputable, and considering the numbering $W_{(-)}$, we see from
Theorem~\ref{thm:W-phi-comp-equiv} that this numbering is precomplete, but not
$A$-pre\-com\-plete.
If moreover $A$ is c.e. and incomplete, then by Arslanov's completeness
criterion (Theorem~\ref{thm:arslanov}) we know that every $A$-computable
function has a fixpoint for this numbering.
So we see that the existence of fixpoints of $A$-computable functions does not,
in general, give us $A$-precompleteness, even for a precomplete numbering.

\begin{corollary} \label{cor:precomplete}
  There exists a set $A$ and a numbering $\gamma$ such that $\gamma$ is
  precomplete but not $A$-precomplete.
\end{corollary}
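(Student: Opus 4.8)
The plan is to obtain Corollary~\ref{cor:precomplete} as an immediate consequence of Theorem~\ref{thm:W-phi-comp-equiv}. The theorem gives, for any precomplete $\omega$-divisible numbering $\gamma$, the equivalence of ``$A$ is computable'' and ``$\gamma$ is $A$-precomplete.'' Contrapositively, if we can exhibit a single such $\gamma$ together with a single noncomputable set $A$, then $\gamma$ is precomplete but not $A$-precomplete, which is exactly what the corollary asserts.

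So the concrete steps are as follows. First, I would invoke the last sentence of Theorem~\ref{thm:W-phi-comp-equiv}: the standard numbering $W_{(-)}$ (or equally $\varphi_{(-)}$) is precomplete and $\omega$-divisible, hence satisfies the hypotheses of the theorem. Second, I would fix any noncomputable oracle $A$ — for instance $A = \emptyset'$, or indeed any noncomputable c.e.\ set. Third, applying the equivalence (2) $\Leftrightarrow$ (3) of Theorem~\ref{thm:W-phi-comp-equiv}, or more directly (1) $\Leftrightarrow$ (3), and using that $A$ is not computable, I conclude that $W_{(-)}$ is not $A$-precomplete. Since $W_{(-)}$ is precomplete, this proves the corollary.

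Honestly, there is no real obstacle here: the work has all been done in Theorem~\ref{thm:omegasep-Aprecomplete-Acomp} and its consequences, and the corollary is just a restatement emphasizing the existential form. The only thing to be careful about is to phrase it so that it is clear we are not claiming anything uniform — we simply need existence of one pair $(A, \gamma)$.

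\begin{proof}
  By Theorem~\ref{thm:W-phi-comp-equiv}, the numbering $W_{(-)}$ is precomplete
  and $\omega$-divisible, so the equivalence stated there applies to it.
  Let $A$ be any noncomputable set, for example $A = \emptyset'$.
  Since $A$ is not computable, item (1) of Theorem~\ref{thm:W-phi-comp-equiv}
  fails, and hence item (3) fails as well; that is, $W_{(-)}$ is not
  $A$-precomplete.
  As $W_{(-)}$ is precomplete, the pair $A$ and $\gamma = W_{(-)}$ witnesses the
  claim.
\end{proof}
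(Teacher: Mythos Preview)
Your proposal is correct and follows essentially the same approach as the paper: the corollary is derived directly from Theorem~\ref{thm:W-phi-comp-equiv} by taking $\gamma = W_{(-)}$ and any noncomputable $A$. The paper's argument is exactly this, stated in the paragraph immediately preceding the corollary, so there is nothing to add.
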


We currently do not know if the converse implication also does not hold,
so we ask:

\begin{question} \label{Q}
  Does there exists a set $A$ and a numbering $\gamma$ such that $\gamma$ is
  $A$-precomplete but not precomplete?
\end{question}

\section{Skolem functions}

Just as the recursion theorem has a version with parameters
(Theorem~\ref{recthmparam}), we can formulate in the same way a parameterized
version of
Arslanov's completeness criterion (Theorem~\ref{thm:arslanov}).
Again we can formulate this for arbitrary precomplete numberings, and in
relativized form, as follows:

\begin{theorem}[Arslanov's completeness criterion with parameters,
for precomplete numberings]
  \label{thm:arslanovparam}
  If $A$ is a incomplete c.e.\ set and $\gamma$ is a precomplete numbering then
  for every $A$-computable binary function $h$ there is an $A$-computable
  function $f$ such that
  \begin{equation} \label{Skolem}
    h(n, f(n)) \simgamma f(n).
  \end{equation}
\end{theorem}

\subsection{Complexity of Skolem functions}

It was already stated in Arslanov~\cite{Arslanov} that this form of Arslanov's
completeness criterion holds for the standard numbering of c.e.\ sets $W_{(-)}$.
Following the notation in \cite{Terwijn2020}, we refer to $f$ as in \eqref{Skolem}
as the Skolem function, since it is the Skolemization of
\[
  \forall n\exists x.\, h(n, x) \sim_\gamma x,
\]
which holds by Theorem~\ref{thm:arslanov}.

\begin{definition}
  For a given numbering $\gamma$, we say that there exist
  $B$-computable Skolem functions for
  $A$-computable functions if $f$ as in \eqref{Skolem} can be
  chosen to be $B$-computable. If this is the case we write
  $\Skolem(A,B,\gamma)$.
\end{definition}

Note that Theorem~\ref{thm:arslanovparam} says that $\Skolem(A, A, \gamma)$
for every incomplete c.e.\ $A$ and precomplete~$\gamma$.
This can be proved by analyzing the original proof of Arslanov.
For completeness we include a proof here.

\begin{proof}[Proof of Theorem~\ref{thm:arslanovparam}]
  Let $A$ be a incomplete, c.e.\ set and let $\gamma$ be a precomplete
  numbering.
  Let $\hat h$ be a computable approximation of $h$ and let $m$ be its
  $A$-computable modulus.  By the properties of a modulus we have that for all
  $n, x \in \omega$ and all $s \ge m(n, x)$, $\hat h(n, x, s) = h(n, x)$.

  Define
  \[
    \psi(n, x, k) =
    \begin{cases}
      \hat h(n, x, s_k) & \text{if $k \in \emptyset'$ and $s_k$ is minimal such that $k \in \emptyset_s'$}\\
      \uparrow & \text{if $k \not \in \emptyset'$}.
    \end{cases}
  \]

  Let $g$ be a totalization of $\psi$ modulo $\gamma$.  By the recursion theorem
  with parameters there is a computable $\hat f(n, k)$ such that for all $n, k
  \in \omega$,
  \[
    \hat f(n, k) \sim_\gamma g(n, \hat f(n, k), k).
  \]

  Define
  \[
    r(n) = \text{$k$ where $\tuple{s, k} = \mu \tuple{s, k} [ m(n, \hat f(n, k))
    \le s \wedge k \in \emptyset_s' ]$}.
  \]

  We claim $r$ is total.  Note that by minimality, $s = s_k$ from above.
  Suppose there exists $n$ such that no such $s$ and $k$ exist.  Then for all $k
  \in \emptyset'$, $m(n, \hat f(n, k)) > s_k$ and thus $k \in \emptyset_{m(n,
  \hat f(n, k))}'$.  It follows that $\emptyset' \le_T A$, contradicting our
  assumption that $A$ is incomplete.

  Since $r(n)$ is total and $m(n, \hat f(n, r(n))) \le s_{r(n)}$ it follows that
  \[
    h(n, \hat f(n, r(n)) =  \hat h(n, \hat f(n, r(n)), s_{r(n)})
  \]
  and thus since $r(n) \in \emptyset'$
  \begin{align*}
    h(n, \hat f(n, r(n))
    &= \hat h(n, \hat f(n, r(n)), s_{r(n)})\\
    &\sim_\gamma g(n, \hat f(n, r(n)), r(n))\\
    &\sim_\gamma \hat f(n, r(n)).
  \end{align*}

  Therefore, $f(n) = \hat f(n, r(n))$ is a fixpoint of $h$ modulo $\gamma$.
\end{proof}

This shows that the Skolem functions have degree at most $A$.  In
Terwijn~\cite[Theorem 3.1]{Terwijn2020} it was shown that, for the numbering
$W_{(-)}$, the Skolem functions in general cannot be computable.  We will now
show that the degree of the Skolem functions for an incomplete c.e.\ set $A$ is
in fact equal to~$A$.

\begin{theorem}
  \label{thm:skolem-low}
  For every $A, B$ and every numbering $\gamma$, if $\Skolem(A,B,\gamma)$
  then $A \legammaT B$.
\end{theorem}

\begin{proof}
  Suppose that for every $A$-computable $h$ there exists such a
  $B$-com\-pu\-table~$f$.
  Let $g$ be an $A$-computable function and define $h(n, x) = g(n)$.
  Take $f$ $B$-computable such that $h(n, f(n)) \sim_\gamma f(n)$ for all $n$.
  It now follows that
  \[
    g(n) = h(n, f(n)) \sim_\gamma f(n).\qedhere
  \]
\end{proof}

\begin{corollary} \label{cor:divSkolem}
  Let $A$ and $B$ be oracles, and let $\gamma$ be a $(2, B)$-divisible
  numbering.
  If $\Skolem(A,B,\gamma)$ then $A\le_T B$.
\end{corollary}

\begin{proof}
  It follows from Theorem~\ref{thm:skolem-low} that $A \legammaT B$, and
  thus $A\leq_T B$ by Theorem~\ref{thm:legammaT-div-leT}
\end{proof}

\begin{corollary} \label{cor:Skolem}
  Suppose that $A$ is an incomplete c.e.\ set, and let $\gamma$ be one of the
  numberings $W_{(-)}$ or  $\varphi_{(-)}$.
  Then $\Skolem(A,B,\gamma)$ if and only if $A\le_T B$.
\end{corollary}

\begin{proof}
  By Example~\ref{ex:divisible} the numberings $W_{(-)}$ and $\varphi_{(-)}$ are
  $\omega$-divisible, hence $2$-divisible, so if $\Skolem(A,B, \gamma)$ then
  $A\le_T B$ by Corollary~\ref{cor:divSkolem}.
  Conversely, if $A\le_T B$ then $\Skolem(A,B,\gamma)$ by
  Theorem~\ref{thm:arslanovparam}.
\end{proof}

In Barendregt and Terwijn~\cite[Question 3.4]{BarendregtTerwijn} the question
was posed whether the uniformly computable existence of fixpoints for a
numbering implies that the numbering is precomplete.  This question remains
open, but we note here that in general the answer is negative for the
\emph{relativized\/} version of this question. Namely, we see from
Theorem~\ref{thm:arslanovparam}, together with
Theorem~\ref{thm:W-phi-comp-equiv}, that even if every $A$-computable family of
$A$-computable functions has a fixpoint modulo $\gamma$, computable uniformly in
$A$, this does not guarantee that $\gamma$ is $A$-precomplete.

\subsection{Comparison of results}

After putting this paper on the arXiv, M. M. Arslanov was kind enough to
send us a yet unpublished but submitted paper (Fixed-point selection
functions) that also contains results about the complexity of computing
fixpoints, some of which are related to the results of this paper.

The two papers use slightly different terminology.  Since we are only concerned with
the behaviour of total ($A$-)computable functions, we have no equivalent of
Arslanov's notion of the existence of a fixed point selection function for a set~$A$,
which is phrased in terms of a universal function relative to~$A$.
Our notion $\Skolem(A, B, \gamma)$ corresponds to the existence of
a $B$-computable fixed point selection function modulo~$\gamma$ for every
$A$-computable function.

The most notable relation between our results is that our
Corollary~\ref{cor:Skolem} generalizes Arslanov's Theorem 7 to the case when $B$
is not comparable to $A$.  The negation of $\Skolem(A, B, \gamma)$ is equivalent
to the existence of a function $h \le_T A$ that has no $B$-computable fixed
point selection function.  Arslanov's result that $h$ can be taken of degree $A$
can be obtained by encoding $A$ into~$h$.

\section*{Acknowledgements}

We thank Andrea Sorbi for several helpful remarks about ceers.

\end{document}